\theoremstyle{definition}
\newtheorem{remark}[subsection]{Remark}
\theoremstyle{plain}
\newtheorem{theorem}[subsection]{Theorem}
\newtheorem{theoreme}[subsection]{Th\'eor\`eme}
\newtheorem{lemma}[subsection]{Lemma}
\def\def\svgscale{0.02}
  \providecommand\color[2][]{%
    \errmessage{(Inkscape) Color is used for the text in Inkscape, but the package 'color.sty' is not loaded}%
    \renewcommand\color[2][]{}%
  }%
  \providecommand\transparent[1]{%
    \errmessage{(Inkscape) Transparency is used (non-zero) for the text in Inkscape, but the package 'transparent.sty' is not loaded}%
    \renewcommand\transparent[1]{}%
  }%
  \global\let\svgwidth\undefined%
  \global\let\svgscale\undefined%
\def\svgscale{0.02}
  \providecommand\color[2][]{%
    \errmessage{(Inkscape) Color is used for the text in Inkscape, but the package 'color.sty' is not loaded}%
    \renewcommand\color[2][]{}%
  }%
  \providecommand\transparent[1]{%
    \errmessage{(Inkscape) Transparency is used (non-zero) for the text in Inkscape, but the package 'transparent.sty' is not loaded}%
    \renewcommand\transparent[1]{}%
  }%
  \global\let\svgwidth\undefined%
  \global\let\svgscale\undefined%
\def\def\svgscale{0.03}
  \providecommand\color[2][]{%
    \errmessage{(Inkscape) Color is used for the text in Inkscape, but the package 'color.sty' is not loaded}%
    \renewcommand\color[2][]{}%
  }%
  \providecommand\transparent[1]{%
    \errmessage{(Inkscape) Transparency is used (non-zero) for the text in Inkscape, but the package 'transparent.sty' is not loaded}%
    \renewcommand\transparent[1]{}%
  }%
  \global\let\svgwidth\undefined%
  \global\let\svgscale\undefined%
\def\svgscale{0.03}
  \providecommand\color[2][]{%
    \errmessage{(Inkscape) Color is used for the text in Inkscape, but the package 'color.sty' is not loaded}%
    \renewcommand\color[2][]{}%
  }%
  \providecommand\transparent[1]{%
    \errmessage{(Inkscape) Transparency is used (non-zero) for the text in Inkscape, but the package 'transparent.sty' is not loaded}%
    \renewcommand\transparent[1]{}%
  }%
  \global\let\svgwidth\undefined%
  \global\let\svgscale\undefined%
\def\def\svgscale{0.04}
  \providecommand\color[2][]{%
    \errmessage{(Inkscape) Color is used for the text in Inkscape, but the package 'color.sty' is not loaded}%
    \renewcommand\color[2][]{}%
  }%
  \providecommand\transparent[1]{%
    \errmessage{(Inkscape) Transparency is used (non-zero) for the text in Inkscape, but the package 'transparent.sty' is not loaded}%
    \renewcommand\transparent[1]{}%
  }%
  \global\let\svgwidth\undefined%
  \global\let\svgscale\undefined%
\def\svgscale{0.04}
  \providecommand\color[2][]{%
    \errmessage{(Inkscape) Color is used for the text in Inkscape, but the package 'color.sty' is not loaded}%
    \renewcommand\color[2][]{}%
  }%
  \providecommand\transparent[1]{%
    \errmessage{(Inkscape) Transparency is used (non-zero) for the text in Inkscape, but the package 'transparent.sty' is not loaded}%
    \renewcommand\transparent[1]{}%
  }%
  \global\let\svgwidth\undefined%
  \global\let\svgscale\undefined%
\def\def\svgscale{0.04}
  \providecommand\color[2][]{%
    \errmessage{(Inkscape) Color is used for the text in Inkscape, but the package 'color.sty' is not loaded}%
    \renewcommand\color[2][]{}%
  }%
  \providecommand\transparent[1]{%
    \errmessage{(Inkscape) Transparency is used (non-zero) for the text in Inkscape, but the package 'transparent.sty' is not loaded}%
    \renewcommand\transparent[1]{}%
  }%
  \global\let\svgwidth\undefined%
  \global\let\svgscale\undefined%
\def\svgscale{0.04}
  \providecommand\color[2][]{%
    \errmessage{(Inkscape) Color is used for the text in Inkscape, but the package 'color.sty' is not loaded}%
    \renewcommand\color[2][]{}%
  }%
  \providecommand\transparent[1]{%
    \errmessage{(Inkscape) Transparency is used (non-zero) for the text in Inkscape, but the package 'transparent.sty' is not loaded}%
    \renewcommand\transparent[1]{}%
  }%
  \global\let\svgwidth\undefined%
  \global\let\svgscale\undefined%
\def\def\svgscale{0.04}
  \providecommand\color[2][]{%
    \errmessage{(Inkscape) Color is used for the text in Inkscape, but the package 'color.sty' is not loaded}%
    \renewcommand\color[2][]{}%
  }%
  \providecommand\transparent[1]{%
    \errmessage{(Inkscape) Transparency is used (non-zero) for the text in Inkscape, but the package 'transparent.sty' is not loaded}%
    \renewcommand\transparent[1]{}%
  }%
  \global\let\svgwidth\undefined%
  \global\let\svgscale\undefined%
\def\svgscale{0.04}
  \providecommand\color[2][]{%
    \errmessage{(Inkscape) Color is used for the text in Inkscape, but the package 'color.sty' is not loaded}%
    \renewcommand\color[2][]{}%
  }%
  \providecommand\transparent[1]{%
    \errmessage{(Inkscape) Transparency is used (non-zero) for the text in Inkscape, but the package 'transparent.sty' is not loaded}%
    \renewcommand\transparent[1]{}%
  }%
  \global\let\svgwidth\undefined%
  \global\let\svgscale\undefined%
\def\PreLieNAP {\vcenter{\xymatrix@R=1pt@C=1pt{
Pre-Lie\circ Pre-Lie\ar@{->}^{\gamma_{PL}}[ddd]  \ar@{->}^{LPP}[dddrr] &\cong_{_{\mathbb S-mod}}&NAP\circ NAP\ar@{->}^{\gamma_{NAP}}[ddd] &&& \\
  &&&& \\
&&&&\\
Pre-Lie&&NAP&&}}}
\def\ignore#1{}
 \newcommand{\commentc}[1]{}
\begin{document}

\author[E. Burgunder, B. Delcroix-Oger, D.Manchon]{Emily Burgunder, B\'er\'enice Delcroix--Oger, Dominique Manchon}
\address{EB, BDO: Universit\'e Paul Sabatier\\
Institut de Math\'ematiques de Toulouse\\
118 route de Narbonne\\
F-31062 Toulouse Cedex 9 France}
\email{burgunder@math.univ-toulouse.fr}
\email{berenice.delcroix@math.univ-toulouse.fr}
\address{DM: Laboratoire de Math\'ematiques Blaise Pascal\\
Universit\'e Clermont-Auvergne\\
3 place Vasar\'ely\\
CS 60026\\
F63178 Aubi\`ere, France}
\email{manchon@math.univ-bpclermont.fr}

\title{An operad is never free as a pre-Lie algebra}
\keywords{operads, pre-Lie, trees, forest}
\thanks{Our joint work was partially supported by ANR CARMA while in the Plantiers.}


\begin{abstract} 
An operad is naturally endowed with a pre-Lie structure. We prove that as  a pre-Lie algebra an operad is not free. The proof holds on defining a non-vanishing linear operation in the pre-Lie algebra which is zero in any operad. 

Une op\'erade admet une structure d'alg\`ebre pr\'e-Lie naturelle. Nous montrons qu'une op\'erade n'est jamais libre en tant qu'alg\`ebre pr\'e-Lie. La preuve repose sur une relation pr\'e-Lie non nulle qui s'annule dans le cadre op\'eradique.  
\end{abstract}

\maketitle

\section{Version abr\'eg\'ee en fran\c{c}ais}
Soit $(V,\triangleleft)$ une alg\`ebre pr\'e-Lie, c'est-\`a-dire un espace vectoriel muni d'une op\'eration $\triangleleft:V\otimes V\to V$ v\'erifiant la relation
$$( x\triangleleft y)\triangleleft z- x\triangleleft (y\triangleleft z)  =( x\triangleleft z)\triangleleft y-  x\triangleleft (z\triangleleft y)$$
pour tout $x,y,z\in V$. D\'efinissons par r\'ecurrence les \textsl{\'el\'ements d'insertion} suivants :
\begin{align*}
 t\triangleleft{(s_1,s_2)}:=& (t\triangleleft s_1)\triangleleft s_2-t\triangleleft (s_1\triangleleft s_2)\ , \\
 t\triangleleft{(s_1,s_2,s_3)}:=& \big(t\triangleleft{(s_1,s_2)}\big)\triangleleft s_3-t\triangleleft{(s_1\triangleleft s_3,s_2)}-t\triangleleft{(s_1,s_2\triangleleft s_3)}\ , \\
t\triangleleft{(s_1,s_2,\ldots,s_n)}:=& \big( t\triangleleft{(s_1,s_2,\ldots,s_{n-1})}\big)\triangleleft s_n-\sum_{i=1}^{n-1}t\triangleleft{(s_1,\ldots,s_i\triangleleft s_n,\ldots,s_{n-1})},
\end{align*}
avec $t,s_1,\ldots,s_n\in V$. L'\'el\'ement  $t\triangleleft{(s_1,s_2,\ldots,s_n)}$ est invariant par permutation des $s_i,i=1,\ldots,n$, et non nul si $V$ est l'alg\`ebre pr\'e-Lie libre. En effet, en termes d'arbres enracin\'es cf. \cite{Chapoton-Livernet}, cet \'el\'ement se comprend comme le branchement de $s_1,\ldots, s_n$ sur $t$ sans branchement de $s_i$ sur $s_j$ pour $i\neq j\in \{1,\ldots, n\}$.\\

Une op\'erade $\mathcal P$, plus pr\'ecis\'ement $ \mathcal P:=\bigoplus_{n\ge 1}\mathcal P(n)$, est naturellement munie d'une structure d'alg\`ebre pr\'e-Lie d\'efinie en utilisant les compositions partielles $\circ_i:\mathcal P(n)\otimes \mathcal P(m)\to\mathcal P(n+m-1):\mu\otimes \nu\mapsto \mu\circ_i\nu:= \mu(\mbox{id},\cdots, \mbox{id},\underbrace{\nu}_{i^{\textrm{\`eme}} \textrm{ entr\'ee }},\mbox{id},\cdots \mbox{id})$ comme suit~: 
$\forall \mu\in \mathcal P(n),\nu\in\mathcal P(m), \ \mu\triangleleft \nu=\sum_{i=1}^n\mu\circ_i\nu$
 o\`u la somme est \'etendue lin\'eairement.

\begin{theoreme}
Une op\'erade non triviale n'est pas libre en tant qu'alg\`ebre pr\'e-Lie.
\end{theoreme}

En effet, dans une op\'erade, les \'el\'ements d'insertion $\mu\triangleleft(\nu_1,\ldots,\nu_{n+k})$ sont nuls d\`es que l'on consid\`ere une op\'eration $\mu$ d'arit\'e $n$, avec $k\in\mathbb N^*$. 
\section{Pre-Lie insertion laws in operads} 
\subsection{Pre-Lie insertion laws}
A pre-Lie algebra is a vector space $V$ together with a bilinear map $\triangleleft:V\otimes V\to V$ verifying the following relation:
$$\forall x,y,z \in V,\ ( x\triangleleft y)\triangleleft z- x\triangleleft (y\triangleleft z)  =( x\triangleleft z)\triangleleft y-  x\triangleleft (z\triangleleft y).$$

Let $t,s_1,s_2,\ldots, s_n\in V$. Define recursively the following \textsl{insertion elements}:
\begin{align*}
 t\triangleleft{(s_1,s_2)}:=& (t\triangleleft s_1)\triangleleft s_2-t\triangleleft (s_1\triangleleft s_2)\ , \\
 t\triangleleft{(s_1,s_2,s_3)}:=& \big(t\triangleleft{(s_1,s_2)}\big)\triangleleft s_3-t\triangleleft{(s_1\triangleleft s_3,s_2)}-t\triangleleft{(s_1,s_2\triangleleft s_3)}\ , \\
t\triangleleft{(s_1,s_2,\ldots,s_n)}:=& \big( t\triangleleft{(s_1,s_2,\ldots,s_{n-1})}\big)\triangleleft s_n-\sum_{i=1}^{n-1}t\triangleleft{(s_1,\ldots,s_i\triangleleft s_n,\ldots,s_{n-1})}.
\end{align*}
By definition of a pre-Lie algebra, these insertion elements are invariant under permutation of the $s_i$'s. They can be understood combinatorially in the free pre-Lie algebra when viewed as the vector space spanned by rooted trees with its grafting operation defined in Chapoton-Livernet \cite{Chapoton-Livernet}, as follows: $t\triangleleft{(s_1,s_2,\ldots,s_n)}$ is obtained by grafting the trees $s_i$ on $t$ such that no $s_i$ is grafted on an $s_j$. Therefore $t\triangleleft{(s_1,s_2,\ldots,s_n)}$ is nonzero. For example if $t=s_1=\ldots=s_n$ is the pre-Lie generating element, i.e., in terms of rooted trees, a single root, then the $n^{\textrm{th}}$ pre-Lie insertion element is the $n$-leaved corolla: see figure \ref{insertionlaw} for the grafting of $3$ roots on a root.

\begin{figure}
\begin{tikzpicture}
[level distance=0mm,edge from parent/.style={draw=none},
every node/.style={circle,inner sep=1pt, draw}]
\node{1}[grow'=up];
\end{tikzpicture}
$\ \triangleleft\ \Big($
\begin{tikzpicture}
[level distance=0mm,edge from parent/.style={draw=none},
every node/.style={circle,inner sep=1pt, draw}]
\node{3}[grow'=up]
child {node {2}}
child {}
child {node {4}};
\end{tikzpicture}
$\Big)=$
\begin{tikzpicture}
[level distance=10mm,
every node/.style={circle,inner sep=1pt, draw}]
\node  {1} [grow'=up]
child {node {2}}
child {node {3}}
child {node {4}};
\end{tikzpicture}
\caption{ \label{insertionlaw}}
\end{figure}
\subsection{Insertion pre-Lie law on operations of an operad}
An operad $\mathcal P$ is a collection of $\mathbb S_n$-modules $\mathcal P(n)$ together with a substitution, which is a $\mathbb S$-module morphism, $\mathcal P\circ\mathcal P\to \mathcal P$, and the unit map, a morphism of $\mathbb S$-modules $\eta:I\to\mathcal P$ satisfying the associativity and unitarity axioms (axioms for monoids) with $I=(0,\mathbb K\cdot \mbox{id},0,\ldots, 0)$. We will restrict to operads with $\mathcal P(0)=0$ and $\mathcal P(1)=\mathbb K\cdot \textrm{id}$. An equivalent definition, known as the partial definition of an operad,  is given by the partial composition of two operations defined by substitution : $\circ_i:\mathcal P(n)\otimes \mathcal P(m)\to\mathcal P(n+m-1):\mu\otimes \nu\mapsto \mu\circ_i\nu:= \mu(\mbox{id},\cdots, \mbox{id},\underbrace{\nu}_{i^{\textrm{th}} \textrm{ entry }},\mbox{id},\cdots \mbox{id})$: an operad is then a collection of $\mathbb S_n$-modules $\mathcal P(n)$ together with partial compositions compatible with the symmetric group action, and satisfying the  associativity of partial compositions depending on the relative positions of the two graftings (sequential composition \eqref{eq:sequential} and parallel composition \eqref{eq:parallel}) and compatibility with the identity operation \eqref{eq:unity}  that is to say:
\begin{align}
&(\lambda\circ_i\mu)\circ_{i-1+j}\nu=\lambda\circ_i(\mu\circ_j\nu) & \forall 1\leq i\leq l, 1\leq j\leq \ m,\label{eq:sequential}\\
&(\lambda\circ_i\mu)\circ_{k-1+m} \nu=(\lambda\circ_k\nu)\circ_i\mu&\forall 1\leq i \leq k\leq l\label{eq:parallel}\\
&{\mbox{id}}\circ_1\nu=\nu\ ,\ \mu\circ_i{\mbox{id}}=\mu\label{eq:unity}
\end{align}
for any $\lambda\in\mathcal P(l),\ \mu\in\mathcal P(m),\ \nu\in\mathcal P(n)$ and the identity element ${\mbox{id}}\in\mathcal P(1)$. For more details see for example \cite{Loday-Vallette}.\\

An operad $\mathcal P$ can be endowed with a natural graded pre-Lie algebra structure, the grading being given by arity minus one. There are several versions of this construction leading to four different pre-Lie algebras:
\begin{eqnarray*}
\mathcal P&:=&\bigoplus_{n\ge 1}\mathcal P(n),\\
\mathcal P^+&:=&\bigoplus_{n\ge 2}\mathcal P(n),\\
\underline{\mathcal P}&:=&\bigoplus_{n\ge 1}\mathcal P(n)_{S_n},\\
\underline{\mathcal P}^+&:=&\bigoplus_{n\ge 2}\mathcal P(n)_{S_n},\\
\end{eqnarray*}
The first one is mentioned in \cite[Proposition 5.2.20]{Loday-Vallette}, the third one is due to Kapranov and Manin \cite{KM01} (see also \cite{Chapoton}), the second and fourth have been proposed in \cite{FM15}.  The pre-Lie product is defined as follows: 
$$\forall \mu\in \mathcal P(n),\nu\in\mathcal P(m), \ \mu\triangleleft \nu=\sum_{i=1}^n\mu\circ_i\nu.$$
The pre-Lie relation reads as such: the composition of two operations on different entries of a third operation does not depend on the order in which the  partial compositions are performed, by virtue of the parallel axioms of operads. The Lie algebra underlying $\mathcal P^+$ is pro-unipotent. This obviously defines a pre-Lie product on the coinvariants, leading to the third and fourth variants. The Lie algebra underlying $\underline{\mathcal P}^+$ is of course pro-unipotent as well. The element $\hbox{id}\in\mathcal P_1$ verifies:
$$\hbox{id}\triangleleft \mu=\mu,\hskip 12mm \mu\triangleleft\hbox{id}=n\mu$$
for any $\mu\in\mathcal P(n)$.\\

The free operad on some generating operations can be understood combinatorially as the vector space spanned by planar reduced rooted trees with internal nodes decorated with the generating operations, endowed with the following partial operations : for two trees $t,s$, the partial composition $t\circ_i s$ is the plugging of the tree $s$ into the $i^{\textrm{th}}$ leaf of $t$, or equivalently substitute the $i^{\textrm{th}}$ leaf of $t$ by $s$. In this setting the insertion element  $t\triangleleft{(s_1,s_2,\ldots,s_n)}$ for operations $t,s_1,\ldots, s_n$, viewed as trees, is defined as the plugging of the trees $s_i$'s on $t$, no $s_i$'s being plugged on to each other.  This property is the key element of the proof of the non-freeness of operads.
\section{Operads are not free pre-Lie algebras}
 In \cite{Manchon-Saidi} the third author and A. Sa{\"{\i}}di have proved that the pre-Lie operad is not free as a pre-Lie algebra. This is part of a more general phenomenon:
\begin{theorem}
Operads are not free as pre-Lie algebras.
\end{theorem}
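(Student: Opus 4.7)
The strategy is to exhibit an identity in the pre-Lie product that holds in any operad but fails in the free pre-Lie algebra. Concretely, I claim that for any $\mu\in\mathcal P(n)$ and any $\nu_1,\ldots,\nu_m\in\mathcal P$ with $m>n$, the insertion element $\mu\triangleleft(\nu_1,\ldots,\nu_m)$ vanishes in $\mathcal P$. The corresponding insertion element in the free pre-Lie algebra is always nonzero, as recalled above, so this vanishing is a genuine extra relation satisfied in the operadic setting.

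To establish the vanishing, I would prove by induction on $m$ the combinatorial formula that $\mu\triangleleft(\nu_1,\ldots,\nu_m)$ equals the sum, over injective maps $\phi:[m]\hookrightarrow[n]$, of the operadic element in which $\nu_i$ occupies the $\phi(i)$-th input of $\mu$ while the remaining inputs are left untouched; by the parallel axiom \eqref{eq:parallel} this element does not depend on the order in which the $m$ partial compositions are carried out. The case $m=1$ is the definition of $\triangleleft$. For the inductive step, I apply $\triangleleft\nu_m$ to the formula for $\mu\triangleleft(\nu_1,\ldots,\nu_{m-1})$ and split each resulting partial composition $\circ_k$ according to whether $\nu_m$ lands inside an already-placed $\nu_i$ or in a free input of $\mu$. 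By the sequential axiom \eqref{eq:sequential}, the former terms recombine as $\sum_{i=1}^{m-1}\mu\triangleleft(\nu_1,\ldots,\nu_i\triangleleft\nu_m,\ldots,\nu_{m-1})$ and cancel exactly against the correction in the recursive definition of the insertion element. By the parallel axiom \eqref{eq:parallel}, the latter terms regroup as the sum of all extensions of $\phi:[m-1]\hookrightarrow[n]$ to an injection $[m]\hookrightarrow[n]$. When $m>n$, no such injection exists, so the formula gives $0$.

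With the vanishing lemma in hand, I argue by contradiction: suppose that $\mathcal P$, or any of the three variants $\mathcal P^+$, $\underline{\mathcal P}$, $\underline{\mathcal P}^+$, is free as a pre-Lie algebra on a generating set $G$. Since the operad is non-trivial, $G$ is non-empty; fix $\mu\in G$ and let $n\ge 1$ be its arity in $\mathcal P$. By the lemma, $\mu\triangleleft(\mu,\ldots,\mu)$ with $n+1$ copies of $\mu$ is zero in $\mathcal P$. Under the assumed pre-Lie isomorphism, however, this element corresponds to the insertion element $\mu\triangleleft(\mu,\ldots,\mu)$ computed inside the free pre-Lie algebra on $G$, which is nonzero---it is the $(n+1)$-leaved corolla with all vertices labeled $\mu$, a basis element of the free pre-Lie algebra in the rooted-tree model of Chapoton--Livernet. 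This contradiction proves the theorem.

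The one delicate point will be the bookkeeping in the inductive step: for each injection $\phi:[m-1]\hookrightarrow[n]$ and each input of the corresponding composite, one must correctly decide whether inserting $\nu_m$ there is of sequential or parallel type and shift the composition index appropriately via \eqref{eq:sequential}--\eqref{eq:parallel}. This is routine but error-prone, and it is essentially the only nontrivial computation in the proof.
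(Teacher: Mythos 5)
Your proposal follows essentially the same route as the paper: the same vanishing lemma (the insertion element $\mu\triangleleft(\nu_1,\ldots,\nu_m)$ expands, via the sequential and parallel axioms, as a sum over injections of $\{1,\ldots,m\}$ into the inputs of $\mu$, hence vanishes once $m$ exceeds the arity), combined with the same observation that the corresponding element in a free pre-Lie algebra is a nonzero corolla-type element. The only point to tidy up is that a pre-Lie generator of $\mathcal P$ need not be arity-homogeneous; but since every element of $\bigoplus_n\mathcal P(n)$ has only finitely many nonzero arity components, the lemma still forces $\mu\triangleleft(\mu,\ldots,\mu)=0$ for $m$ large enough, so your contradiction goes through.
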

\ignore{
\begin{remark} We can also consider unitary pre-Lie algebras, that is to say endowed with an element $1$ such that $1\triangleleft 1=1$ and $x\triangleleft 1=1\triangleleft x=x$. The natural pre-Lie structure on $\mathcal P_+$ naturally extends to a natural unitary pre-Lie algebra structure on $\mathcal P=\bigoplus_{n\ge 1}\mathcal P(n)$. Any operad $\mathcal P$ is not free as non-unitary pre-Lie algebra neither.  Indeed, suppose there exists  a map $\mathcal P \to PreLie(V)$ for a certain vector space $V$, which is an isomorphism of pre-Lie algebras. Consider the combinatorial description of the free pre-Lie algebra as in \cite{Chapoton-Livernet}. The operation $\mbox{id}$ will be sent on a root, as it is a generating operation. But then in $\mathcal P$, we get $\mbox{id}\triangleleft(\mbox{id},{\mbox{id}})=0$ which in the setting of unitary pre-Lie algebras  is non vanishing: indeed it amounts to the associator applied to the root  see figure \ref{nonunitary}.
\end{remark}
}
\begin{lemma}
Let $\mathcal P$ be an operad endowed with its pre-Lie algebra structure $\triangleleft$. For any  operation $\mu$  of arity $n$, and any $\nu_1,\ldots,\nu_{n+1}$ the $(n+1)^{\textrm{th}}$-insertion pre-Lie element vanishes : $\mu\triangleleft{(\nu_1,\nu_2,\ldots,\nu_{n+1})}=0$.
\end{lemma}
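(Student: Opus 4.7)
The plan is to prove by induction on $k \ge 1$ a closed formula for the insertion element in any operad. Given $\mu \in \mathcal P(n)$, operations $\nu_1,\ldots,\nu_k$, and an injection $\varphi : \{1,\ldots,k\} \hookrightarrow \{1,\ldots,n\}$, let $\mu\langle \varphi;\nu_1,\ldots,\nu_k\rangle$ denote the operation obtained by substituting $\nu_j$ at the $\varphi(j)$-th input of $\mu$ for every $j$ and leaving the identity in the remaining inputs. By the parallel axiom \eqref{eq:parallel}, this simultaneous substitution is independent of the order in which the $k$ partial compositions are performed, so the notation is well defined. The closed formula I aim for is
\begin{equation*}
\mu \triangleleft (\nu_1,\ldots,\nu_k) \;=\; \sum_{\varphi : \{1,\ldots,k\}\hookrightarrow \{1,\ldots,n\}} \mu\langle \varphi;\nu_1,\ldots,\nu_k\rangle.
\end{equation*}
The lemma then falls out immediately: for $k = n+1$ there is no injection $\{1,\ldots,n+1\}\hookrightarrow \{1,\ldots,n\}$, so the right-hand side is an empty sum.

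The base case $k=1$ is just the definition $\mu\triangleleft \nu_1 = \sum_{i=1}^n \mu \circ_i \nu_1$. For the inductive step I would expand the recursive definition
\[
\mu\triangleleft(\nu_1,\ldots,\nu_k) = \bigl(\mu\triangleleft(\nu_1,\ldots,\nu_{k-1})\bigr)\triangleleft \nu_k - \sum_{j=1}^{k-1}\mu\triangleleft(\nu_1,\ldots,\nu_j\triangleleft \nu_k,\ldots,\nu_{k-1}),
\]
apply the induction hypothesis to both occurrences of $\triangleleft$ with $k-1$ arguments, and sort the resulting terms of the form $\mu\langle\varphi;\nu_1,\ldots,\nu_{k-1}\rangle\circ_j\nu_k$ according to where the position $j$ lands in the intermediate operation. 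The sequential axiom \eqref{eq:sequential} turns every term where $j$ sits inside a sub-operation $\nu_\ell$ into $\mu\langle\varphi;\nu_1,\ldots,\nu_\ell\triangleleft\nu_k,\ldots,\nu_{k-1}\rangle$; the parallel axiom \eqref{eq:parallel} turns every term where $j$ sits at an input of $\mu$ outside the image of $\varphi$ into $\mu\langle\tilde\varphi;\nu_1,\ldots,\nu_k\rangle$, where $\tilde\varphi$ is the extension of $\varphi$ sending $k$ to that free input.

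The first collection of terms, summed over $\varphi$ and over $\ell \in \{1,\ldots,k-1\}$, recombines by a second application of the induction hypothesis into the subtracted sum in the recursion and therefore cancels. The second collection realises each injection of $\{1,\ldots,k\}$ into $\{1,\ldots,n\}$ exactly once, since any such injection restricts uniquely to an injection of $\{1,\ldots,k-1\}$ together with a choice of image for $k$ in the complement. The main obstacle is purely combinatorial bookkeeping: keeping straight which inputs of the intermediate operation originate from $\mu$ and which come from previously plugged $\nu_\ell$'s, and applying \eqref{eq:sequential} and \eqref{eq:parallel} in the appropriate regime each time. Once this accounting is carried out, the closed formula is simply the statement that in an operad the pre-Lie insertion element ``plugs $\nu_1,\ldots,\nu_k$ into $k$ distinct inputs of $\mu$'', and the conclusion of the lemma reduces to the pigeonhole principle.
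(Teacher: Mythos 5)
Your proof is correct and follows essentially the same route as the paper: the paper's argument consists precisely of your closed formula, namely that $\mu\triangleleft(\nu_1,\ldots,\nu_k)$ is the sum of simultaneous substitutions of the $\nu_j$'s into $k$ pairwise distinct inputs of $\mu$ (a sum over injections, written there as $\sum \mu\circ_{m_1,\ldots,m_k}(\nu_1,\ldots,\nu_k)$ with the $m_j$ distinct), followed by the observation that no such choice of positions exists when $k>n$. Your inductive verification of the formula --- using the sequential axiom to cancel the terms landing inside a previously plugged $\nu_\ell$ against the subtracted sum, and the parallel axiom to produce each extended injection exactly once --- simply fills in the bookkeeping that the paper compresses into the phrase ``with similar notations one gets''.
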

\begin{proof}Direct computation of the associator in the operad settings using the sequential and parallel composition axioms is easier to make explicit with the use of the operations $\circ_{i,j}:\mathcal P(l)\otimes \mathcal P(m_1)\otimes \mathcal P(m_2)\to \mathcal P(l+m_1+m_2-2)$ defined by:
$$\mu\circ_{i,j}(\nu_1,\nu_2):= \mu(\mbox{id},\ldots, \mbox{id},\underbrace{\nu_1}_{i^{\textrm{th}} \textrm{ entry }},\mbox{id},\ldots ,\mbox{id}, \underbrace{\nu_2}_{j^{\textrm{th}} \textrm{ entry }},\mbox{id},\ldots,\mbox{id}).$$
We then get:
\begin{align*}
\mu\triangleleft(\nu_1,\nu_2)=& \sum_{i=1}^{l}\sum_{\tiny{\begin{array}{c}j=1\\j\neq i\end{array}}}^{l} \mu\circ_{i,j}(\nu_1,\nu_2).
\end{align*}
With similar notations one gets the $n^{\textrm{th}}$ pre-Lie insertion element for an operation $\mu$ of arity $l$:
\begin{align*}
 \mu\triangleleft(\nu_1,\ldots, \nu_n)=\sum_{\{m_1,\ldots,m_{n}\}\subset\{1,\ldots,l\}}^l \mu\circ_{m_1,\ldots,m_{n}}(\nu_1,\ldots,\nu_n)
\end{align*}
where the $m_j$'s are all distinct.  The $n^{\textrm{th}}$ pre-Lie insertion element for an operation $\mu$ of arity $l< n$ is therefore $0$.
\end{proof}

\begin{proof}[Proof of the theorem]
Any operad $\mathcal P$ can be presented as a quotient of the free operad on some generating operations $\mu_1^2,\ldots, \mu_{k_1}^2,\mu_1^3,\ldots, \mu^3_{k_2},\ldots$, where $\mu^i_j$ is an operation of arity $i$. Then for every generating operation $\mu^i_j$ and for any $\nu_1,\ldots,\nu_{i+1}$ operations of $\mathcal P$, the pre-Lie insertion element $\mu_j^i\triangleleft(\nu_1,\ldots,\nu_{i+1})$ vanishes. This provides a relation which is not pre-Lie.  
\end{proof}

\begin{figure}
\begin{tikzpicture}
[level distance=0mm,edge from parent/.style={draw=none},
every node/.style={circle,inner sep=1pt, draw}]
\node{1}[grow'=up];
\end{tikzpicture}
$\ \triangleleft\ \Big($
\begin{tikzpicture}
[level distance=0mm,edge from parent/.style={draw=none},
every node/.style={circle,inner sep=1pt, draw}]
\node{3}[grow'=up]
child {node {2}}
child {};
\end{tikzpicture}
$\Big)=$
\begin{tikzpicture}
[level distance=10mm,
every node/.style={circle,inner sep=1pt, draw}]
\node  {1} [grow'=up]
child {node {2}}
child {node {3}};
\end{tikzpicture}
\caption{ \label{nonunitary}}
\end{figure}
\begin{remark}
In $\mathcal P$, the first vanishing insertion element is given by 
$$\hbox{id}\triangleleft(\hbox{id},\hbox{id})=0.$$
This relation already proves that the pre-Lie algebra $\mathcal P$ cannot be free (and of course $\underline{\mathcal P}$ neither). In the pro-unipotent version $\mathcal P^+$, the simplest insertion relations are given by
$$\mu\triangleleft(\mu,\mu,\mu)=0$$
with $\mu$ any generator of arity two, when such a generator exists in $\mathcal P$.
\end{remark}
\begin{remark}
The relations provided by the vanishing of insertion elements are not the only ones appearing: others come from the associativity axioms. For example  let us consider the free operad on one generator, namely $\mbox{Mag}_2=\mathcal F(\def\svgscale{0.02}
)$. Computing the map from $\mbox{Mag}_2$ to $(\mbox{PreLie}(V)/_R,\curvearrowleft)$ in low arities, one gets that $\def\svgscale{0.02}
$ will be sent on roots labelled by $\bullet^{\def\svgscale{0.02}
}$. Then $V\supset\{\bullet^{\def\svgscale{0.02}
},\bullet^{\def\svgscale{0.03}
},\bullet^{\def\svgscale{0.04}
},\bullet^{\def\svgscale{0.04}
},\bullet^{\def\svgscale{0.04}
} \ldots\}$. The following relation \begin{align*}
6\ \def\svgscale{0.02}
\triangleleft\big((\def\svgscale{0.02}
\triangleleft(\def\svgscale{0.02}
, \def\svgscale{0.02}
)),(\def\svgscale{0.02}
\triangleleft(\def\svgscale{0.02}
,\def\svgscale{0.02}
)) \big)=(\def\svgscale{0.02}
\triangleleft(\def\svgscale{0.02}
,\def\svgscale{0.02}
))\triangleleft(\def\svgscale{0.02}
,\def\svgscale{0.02}
,\def\svgscale{0.02}
,\def\svgscale{0.02}
)
\end{align*}
comes from the associativity axioms. But it does not come from the vanishing induced by the insertion elements and is not a pre-Lie relation.
\end{remark}
Finally, as the operads pre-Lie and NAP share the same underlying $\mathbb S$-module, on the free Pre-Lie algebra over a vector space, the pre-Lie product of two elements $x\triangleleft y$  can be viewed as a polynomial of elements obtained with the NAP-product $\LHD$. To have a better understanding of the operad $\mbox{Mag}_2$ as a pre-Lie algebra,  it is natural to try to  perform an analogue of Grobner basis algorithm by defining leading power product $LPP(x)$ for every $x\in \mbox{Mag}_2$ which is $NAP$-multiplicative with respect to the pre-Lie product of elements :  $\forall x,y \in \mbox{Mag}_2$ the leading power product of $x\triangleleft y$ satisfies    $LPP(x\triangleleft y)=LPP(x)\LHD LPP(y)$  where $\LHD$ is a NAP product on $\mbox{Mag}_2$ see for example \cite{Dzhumadildaev-Lofwall}.

It can be shown by direct inspection, that there exists  no such NAP algebra structure on $\mbox{Mag}_2$ endowed with the above pre-Lie structure.  This fails with trees with $5$ leaves, taking into account the constraints from trees with less leaves, the NAP relations and the pre-Lie relations.

\end{document}